\theoremstyle{plain}
\newtheorem{theorem}{Theorem}[section]
\newtheorem{proposition}[theorem]{Proposition}
\theoremstyle{definition}
\newtheorem{definition}[theorem]{Definition}
\newtheorem{remark}[theorem]{Remark}
\theoremstyle{remark}
\newtheorem{example}[theorem]{Example}
\newtheorem{corollary}{Corollary}[section]
\newcommand{\Em}[2]{E_{#1}\!\left(#2\right)}
\DeclarePairedDelimiter\floor{\lfloor}{\rfloor}
\newcommand{\N}{\mathbb{N}}
\newcommand{\sigmafunc}{\sigma}
\begin{document}
	\title{The Modular Energy Function : the Sum of Remainders $E_m(n)=\sum_{k=1}^m (n\bmod k)$}
	
	\author{
		{ \sc Es-said En-naoui } \\ 
		University Sultan Moulay Slimane\\ Morocco\\
		essaidennaoui1@gmail.com\\
		\\
	}
	
	\maketitle
	
	\begin{abstract}
		We introduce and study the elementary arithmetic function
		\[
		\Em{m}{n} \;=\; \sum_{k=1}^{m} (n \bmod k),
		\]
		which aggregates the remainders of a fixed integer \(n\) modulo the first \(m\) positive integers.
		This paper is the first part of a two-part study: here we isolate the algebraic and combinatorial structure underlying \(\Em{m}{n}\).
		We present exact reformulations (floor-sum and divisor-sum identities), quotient–residue groupings useful for computation, and illustrative examples that highlight the combinatorial nature of the function.
		Our approach is motivated by classical divisor-sum techniques \cite{Apostol1976,HardyWright2008,Tenenbaum1995} and recent algorithmic treatments of floor sums \cite{Nathanson1996,GrahamKnuthPatashnik1994}.
		\medskip
		
		\noindent\textbf{Keywords:} remainder sum, floor-sum identity, divisor-sum, combinatorial number theory.\\
		\textbf{MSC (2020):} 11A25, 11N37, 11B75.
	\end{abstract}
	
	\section{Introduction}
	Summing arithmetic remainders is a classical operation in elementary number theory, with roots in divisor-sum problems \cite{HardyWright2008,Apostol1976}.
	For fixed positive integers \(m,n\) we define the \emph{modular energy} (or remainder-sum)
	\[
	\Em{m}{n} \;:=\; \sum_{k=1}^{m} (n \bmod k).
	\]
	Despite the simplicity of each summand, the aggregate \(\Em{m}{n}\) encapsulates an interesting mixture of exact arithmetic identities, congruence phenomena, and combinatorial structure, reminiscent of the behavior of summatory divisor functions \cite{Tenenbaum1995}.
	
	This Part I focuses on identities and exact reformulations that do not require analytic machinery.
	Sections in this step present a clean definition and notation, several small examples, and three principal exact identities:
	the floor-sum identity, the divisor-sum identity, and a quotient–residue grouping that is particularly useful for computation \cite{Nathanson1996,GrahamKnuthPatashnik1994}.
	These results form the algebraic backbone for later computational and analytic treatments.
	
	\section{Notation and definitions}
	\begin{definition}
		Let \(m,n\in\N\) with \(m,n\ge 1\). For integer \(k\ge1\) we write
		\[
		n\bmod k := n - k\floor{\tfrac{n}{k}},
		\]
		the remainder of \(n\) on division by \(k\). The \emph{modular energy} is defined by
		\[
		\Em{m}{n} := \sum_{k=1}^{m} (n \bmod k).
		\]
		We will also use the divisor-sum function \(\sigmafunc(d)=\sum_{r\mid d} r\) \cite{HardyWright2008} and denote
		\(L_m=\operatorname{lcm}(1,2,\dots,m)\).
	\end{definition}
	
	\begin{remark}
		The single-term bound \(0\le n\bmod k\le k-1\) implies the trivial global bound
		\[
		0 \le \Em{m}{n} \le \frac{m(m-1)}{2},
		\]
		with the upper bound attained when \(n\equiv -1\pmod{k}\) for every \(1\le k\le m\).
	\end{remark}
	
	\subsection{Examples}
	\begin{example}
		Compute \(\Em{5}{12}\).
		\[
		12\bmod 1=0,\; 12\bmod 2=0,\; 12\bmod 3=0,\; 12\bmod 4=0,\; 12\bmod 5=2,
		\]
		so \(\Em{5}{12}=2\).
	\end{example}
	
	\begin{example}
		Compute \(\Em{6}{7}\).
		\[
		7\bmod 1=0,\;7\bmod 2=1,\;7\bmod 3=1,\;7\bmod 4=3,\;7\bmod 5=2,\;7\bmod 6=1,
		\]
		so \(\Em{6}{7}=8\).
	\end{example}
	
	\section{Exact identities and reformulations}
	\begin{proposition}[Floor-sum identity {\cite[Ch.~I]{Apostol1976}}]\label{prop:floor-sum}
		For all integers \(m,n\ge1\),
		\[
		\Em{m}{n}=mn-\sum_{k=1}^{m} k\floor{\frac{n}{k}}.
		\]
	\end{proposition}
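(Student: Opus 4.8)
The plan is to prove the identity by direct substitution of the definition of the remainder, followed by a single application of the additivity of finite sums; no auxiliary lemmas or manipulation of the floor function are required. The key observation is that the definition given above already expresses each remainder as $n \bmod k = n - k\floor{\tfrac{n}{k}}$, so the whole reformulation unfolds term by term.

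Concretely, I would begin by inserting this expression into the defining sum, writing
\[
\Em{m}{n} = \sum_{k=1}^{m} (n \bmod k) = \sum_{k=1}^{m}\left(n - k\floor{\tfrac{n}{k}}\right).
\]
Next I would split the sum into its two additive pieces by linearity,
\[
\sum_{k=1}^{m} n \;-\; \sum_{k=1}^{m} k\floor{\tfrac{n}{k}},
\]
and observe that the first sum consists of $m$ identical terms equal to $n$, hence evaluates to $mn$. This yields the claimed identity immediately.

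Honestly, the main point is that there is essentially no obstacle here: the statement is a one-line consequence of the definition of $n \bmod k$ together with the additivity of the finite sum, and the floor terms are simply carried along unchanged. The interest of the proposition lies not in its difficulty but in the reformulation it provides — it trades a sum of remainders for a sum involving the arithmetically more tractable quantity $k\floor{\tfrac{n}{k}}$, which is the bridge to the divisor-sum identity and the quotient–residue grouping developed in the remainder of the section.
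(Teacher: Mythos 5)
Your proof is correct and is exactly the standard argument: the paper states this proposition without proof (citing Apostol), but since Section~2 defines \(n\bmod k = n - k\floor{n/k}\), the identity follows by substitution and linearity of the finite sum precisely as you write. Nothing is missing.
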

	
	\begin{proposition}[Divisor-sum identity {\cite[Ch.~I]{HardyWright2008}}]\label{prop_floor_sum}
		For all integers \(m,n\ge1\),
		\[
		\Em{m}{n} = mn - \sum_{d=1}^{n} \sum_{\substack{k\mid d\\ k\le m}} k.
		\]
	\end{proposition}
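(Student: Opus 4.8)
The plan is to derive the divisor-sum identity directly from the Floor-sum identity (Proposition~\ref{prop:floor-sum}), so that no fresh computation of $\Em{m}{n}$ is needed. Since both identities share the same leading term $mn$, it suffices to prove the purely combinatorial equality
\[
\sum_{k=1}^{m} k\floor{\frac{n}{k}} \;=\; \sum_{d=1}^{n} \sum_{\substack{k\mid d\\ k\le m}} k .
\]
The entire argument is then a finite rearrangement of a double sum, requiring no analytic input.

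The key observation I would exploit is that $\floor{n/k}$ counts the positive multiples of $k$ not exceeding $n$; that is,
\[
\floor{\frac{n}{k}} \;=\; \#\{d : 1\le d\le n,\ k\mid d\} \;=\; \sum_{\substack{1\le d\le n\\ k\mid d}} 1 .
\]
Multiplying through by $k$ turns each summand on the left-hand side into a sum of copies of $k$ indexed by the multiples $d$ of $k$ lying in $[1,n]$, namely $k\floor{n/k}=\sum_{1\le d\le n,\ k\mid d} k$. Substituting this into $\sum_{k=1}^{m} k\floor{n/k}$ produces a double sum ranging over all pairs $(k,d)$ with $1\le k\le m$, $1\le d\le n$, and $k\mid d$.

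The final step is to interchange the order of summation, which is legitimate because the double sum is finite. Summing over $d$ on the outside and $k$ on the inside collects, for each $d$, precisely the divisors $k$ of $d$ subject to $k\le m$, yielding the right-hand side.

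The only point requiring care is making the summation swap watertight, and in particular checking that the inner sum on the right is correctly restricted to divisors $k\le m$ rather than to all divisors of $d$. Here one notes that every divisor $k$ of $d$ automatically satisfies $k\le d\le n$, so the constraint $1\le d\le n$ imposes no additional cut-off on $k$; the genuine restriction is $k\le m$, which is exactly the bound inherited from the outer index range $1\le k\le m$ on the left. I expect this bookkeeping of the index set to be the main (if modest) obstacle, since it is the one place where a careless reindexing could silently drop or add terms.
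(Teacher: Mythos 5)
Your argument is correct: reducing to the combinatorial identity $\sum_{k=1}^{m}k\floor{n/k}=\sum_{d=1}^{n}\sum_{k\mid d,\,k\le m}k$ via the count-of-multiples interpretation of $\floor{n/k}$ and a finite interchange of summation is exactly the technique the paper itself uses (in the special case $m=n$) to prove the diagonal identity of Proposition~\ref{prop:diagonal}. Your extra care about the index set, noting that $k\mid d$ with $d\le n$ forces no cut-off beyond $k\le m$, is sound and matches the intended bookkeeping.
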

	
	\begin{proposition}[Quotient–residue grouping {\cite[Ch.~2]{Nathanson1996}}]\label{prop:quotient-grouping}
		For each integer \(j\ge0\) define
		\[
		K_j:=\{1\le k\le m:\; \floor{n/k}=j\}.
		\]
		Then
		\[
		\Em{m}{n}=mn-\sum_{j\ge0} j\sum_{k\in K_j} k.
		\]
	\end{proposition}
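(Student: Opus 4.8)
The plan is to derive this identity directly from the floor-sum identity of Proposition~\ref{prop:floor-sum} by reorganizing the sum according to the value of the quotient $\floor{n/k}$. The sets $K_j$ partition the index range $\{1,2,\dots,m\}$: every $k$ in this range satisfies $\floor{n/k}=j$ for exactly one nonnegative integer $j$, so the collection $\{K_j\}_{j\ge0}$ is a disjoint decomposition whose union is all of $\{1,\dots,m\}$.

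First I would start from $\Em{m}{n}=mn-\sum_{k=1}^{m}k\floor{n/k}$ and focus on the subtracted term. The key step is to replace the single sum over $k$ by a double sum that first fixes the quotient value $j$ and then ranges over all $k\in K_j$. Since $\floor{n/k}=j$ is constant on each $K_j$, the factor $\floor{n/k}$ can be pulled out as $j$, giving
\[
\sum_{k=1}^{m} k\floor{\tfrac{n}{k}}=\sum_{j\ge0}\;\sum_{k\in K_j} k\floor{\tfrac{n}{k}}=\sum_{j\ge0}\; j\sum_{k\in K_j} k.
\]
Substituting this back into the floor-sum identity yields the claimed formula immediately.

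The main thing to verify carefully is that the reindexing is legitimate, namely that the $K_j$ are pairwise disjoint and exhaust $\{1,\dots,m\}$, and that only finitely many are nonempty so the outer sum over $j$ is really a finite sum. Disjointness and exhaustion follow because $k\mapsto\floor{n/k}$ is a well-defined function on each $k$; finiteness follows because $\floor{n/k}\le n$ for all $k\ge1$, so $K_j=\emptyset$ whenever $j>n$, and moreover $K_0$ collects exactly those $k>n$. I would also note the boundary case $j=0$: the summand $j\sum_{k\in K_j}k$ vanishes there, consistent with the fact that $n\bmod k=n$ contributes $k\floor{n/k}=0$ for $k>n$.

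I do not expect any genuine obstacle here, since the result is a pure regrouping of a finite sum and carries no arithmetic content beyond Proposition~\ref{prop:floor-sum}. The only point deserving a sentence of justification is the interchange of summation order implicit in partitioning by quotient value; because all sums are finite this is unconditional. The proof is therefore essentially a one-line substitution once the partition property of the $K_j$ is recorded.
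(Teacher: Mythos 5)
Your proof is correct and matches the paper's approach: the paper states this proposition without proof in Section~3, but the identical regrouping argument (partition $\{1,\dots,m\}$ by the value of $\floor{n/k}$, pull out the constant quotient, substitute into Proposition~\ref{prop:floor-sum}) appears verbatim in the proof of Proposition~\ref{prop:ngtm}. Your extra care about disjointness, the vanishing $j=0$ block, and finiteness of the outer sum is a welcome addition rather than a deviation.
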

	
	\begin{remark}
		This grouping technique is closely related to divisor-block decompositions \cite{Tenenbaum1995}, and it underlies efficient algorithms for computing floor sums \cite{GrahamKnuthPatashnik1994}.
	\end{remark}
	
	\section{Elementary bounds and extremal characterisations}
	We begin with the most immediate bounds and describe exactly when the trivial extremal values occur.
	
	\begin{proposition}[Trivial bounds and extremal classes]\label{prop:bounds-extremal}
		For all integers \(m,n\ge1\),
		\[
		0 \le \Em{m}{n} \le \frac{m(m-1)}{2}.
		\]
		Moreover,
		\begin{enumerate}
			\item \(\Em{m}{n}=0\) if and only if \(\operatorname{lcm}(1,2,\dots,m)\mid n\).
			\item \(\Em{m}{n}=\dfrac{m(m-1)}{2}\) if and only if \(n\equiv -1\pmod{\operatorname{lcm}(1,2,\dots,m)}\).
		\end{enumerate}
	\end{proposition}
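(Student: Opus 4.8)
The plan is to reduce everything to termwise reasoning, since $\Em{m}{n}$ is a sum of the independent quantities $n\bmod k$. First I would record the elementary per-term inequality $0\le n\bmod k\le k-1$, valid for every $k\ge1$, and sum it over $1\le k\le m$. The lower estimate gives $\Em{m}{n}\ge0$ at once, while the upper end telescopes to $\sum_{k=1}^m(k-1)=\tfrac{m(m-1)}{2}$; this re-establishes the two-sided bound already noted in the Remark. The point of this opening step is less the inequality itself than the observation that each bound is a sum of independent per-term bounds, so that equality is governed entirely by the individual terms.

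For the lower extremal class I would exploit nonnegativity: a finite sum of nonnegative integers vanishes precisely when every summand vanishes. Hence $\Em{m}{n}=0$ if and only if $n\bmod k=0$ for every $1\le k\le m$, that is, $k\mid n$ for all such $k$. For the upper extremal class the argument is dual: each summand is bounded above by $k-1$ and the aggregate bound $\tfrac{m(m-1)}{2}$ is attained, which forces $n\bmod k=k-1$ for every $k$. Rewriting $n\bmod k=k-1$ as the congruence $n\equiv-1\pmod k$, equivalently $k\mid(n+1)$, turns the second extremal condition into simultaneous divisibility of $n+1$ by every $k\le m$.

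The single nonroutine step — essentially the only place any number theory enters — is the passage from simultaneous divisibility to one $\operatorname{lcm}$ condition: the common multiples of $\{1,2,\dots,m\}$ are exactly the multiples of $L_m=\operatorname{lcm}(1,2,\dots,m)$. Applying this to $n$ gives $\Em{m}{n}=0\iff L_m\mid n$, and applying it to $n+1$ gives $\Em{m}{n}=\tfrac{m(m-1)}{2}\iff L_m\mid(n+1)\iff n\equiv-1\pmod{L_m}$, which are the two claimed characterisations.

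I do not expect a genuine obstacle here, since the $\operatorname{lcm}$ equivalence is standard. The only care needed is bookkeeping: the $k=1$ term satisfies $n\bmod 1=0$ identically, so it neither blocks the lower condition nor contributes to the upper one, and one should note that the simultaneous attainment of all termwise maxima is not vacuous — the lcm condition exhibits an explicit residue class (namely $n\equiv-1\pmod{L_m}$) realising it, confirming that the upper bound is sharp and attained on a full arithmetic progression.
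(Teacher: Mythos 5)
Your proposal is correct and follows essentially the same route as the paper's proof: termwise bounds $0\le n\bmod k\le k-1$ summed over $k$, with the extremal cases forcing every summand to attain its extreme value, and the resulting simultaneous divisibility conditions on $n$ (resp.\ $n+1$) collapsed to a single congruence modulo $L_m$. No gaps.
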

	
	\begin{proof}
		For each \(1\le k\le m\) we have the elementary bound \(0\le n\bmod k\le k-1\). Summing these inequalities over \(k=1,\dots,m\) yields
		\[
		0 \le \sum_{k=1}^m (n\bmod k) \le \sum_{k=1}^m (k-1) = \frac{m(m-1)}{2},
		\]
		which proves the double inequality.
		
		(1) If \(L_m:=\operatorname{lcm}(1,2,\dots,m)\) divides \(n\) then \(k\mid n\) for every \(1\le k\le m\), hence each remainder \(n\bmod k\) equals \(0\) and \(\Em{m}{n}=0\). Conversely, if \(\Em{m}{n}=0\) then every term \(n\bmod k\) must be \(0\); thus \(k\mid n\) for all \(1\le k\le m\), and therefore \(L_m\mid n\).
		
		(2) If \(n\equiv -1\pmod{L_m}\) then for every \(1\le k\le m\) we have \(n\equiv -1\pmod k\), hence \(n\bmod k=k-1\). Summing gives \(\Em{m}{n}=\sum_{k=1}^m (k-1)=\tfrac{m(m-1)}{2}\). Conversely, if \(\Em{m}{n}=\tfrac{m(m-1)}{2}\) then every summand must attain its maximum \(k-1\); thus \(n\bmod k=k-1\) for all \(k\le m\), i.e. \(n\equiv -1\pmod k\) for all \(k\le m\). This set of congruences is equivalent to the single congruence \(n\equiv -1\pmod{L_m}\), completing the proof.
	\end{proof}
	
	\begin{remark}
		The two extremal characterisations are simple but sharp: the zero case isolates multiples of the full least common multiple \(L_m\), while the maximum case isolates a single residue class modulo \(L_m\).
	\end{remark}
	
	\begin{proposition}[Complementary symmetry]\label{prop:complementary}
		For all integers \(m\ge1\) and all integers \(n\),
		\[
		(n\bmod k) + ((-n-1)\bmod k) = k-1 \qquad\text{for every }1\le k\le m.
		\]
		Consequently,
		\[
		\Em{m}{n} + \Em{m}{-n-1} = \frac{m(m-1)}{2}.
		\]
	\end{proposition}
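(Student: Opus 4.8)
The plan is to establish the pointwise identity for each fixed $k$ first, and then obtain the summed identity by adding over $1\le k\le m$. The crucial preliminary remark is that the convention $j\bmod k := j-k\floor{j/k}$ produces a value in $\{0,1,\dots,k-1\}$ for \emph{every} integer $j$, including negative ones, because the floor rounds downward. In particular both $n\bmod k$ and $(-n-1)\bmod k$ lie in $[0,k-1]$, so their sum lies in the range $[0,2k-2]$. I want to avoid explicit case analysis (e.g.\ splitting on whether $k\mid n$), so the cleanest route is a range-plus-congruence squeeze.

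First I would compute the sum modulo $k$. By definition $n\bmod k\equiv n$ and $(-n-1)\bmod k\equiv -n-1\pmod{k}$, so
\[
(n\bmod k)+\bigl((-n-1)\bmod k\bigr)\;\equiv\;n+(-n-1)\;=\;-1\;\equiv\;k-1\pmod{k}.
\]
Combining this with the range bound gives the key step: the left-hand side is an integer in $[0,2k-2]$ that is congruent to $k-1$ modulo $k$. The only integers in $[0,2k-2]$ with this residue are $k-1$ and $2k-1$, and since $2k-1>2k-2$ lies outside the permitted range, the sum must equal $k-1$ exactly. This proves the pointwise identity for each $1\le k\le m$ (the case $k=1$ being the trivial $0+0=0$).

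Finally, summing the pointwise identity over $k=1,\dots,m$ and using the definition of $\Em{m}{\cdot}$ as a sum of remainders yields
\[
\Em{m}{n}+\Em{m}{-n-1}=\sum_{k=1}^{m}\Bigl[(n\bmod k)+\bigl((-n-1)\bmod k\bigr)\Bigr]=\sum_{k=1}^{m}(k-1)=\frac{m(m-1)}{2},
\]
which is the asserted consequence. I do not expect a genuine obstacle here: the argument is entirely elementary, and the only subtlety worth flagging is that the remainder convention must remain well-defined and land in $[0,k-1]$ for the negative argument $-n-1$. It is precisely this uniform range guarantee that makes the congruence-and-squeeze argument preferable to a direct computation separating the cases $r=0$ and $1\le r\le k-1$.
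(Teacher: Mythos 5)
Your proof is correct. It differs from the paper's argument in a small but genuine way: the paper fixes $k$, writes $n=kq+r$ with $0\le r\le k-1$, and explicitly exhibits the division $-n-1=k(-q-1)+(k-1-r)$ to conclude $(-n-1)\bmod k=k-1-r$, after which the pointwise identity is a one-line cancellation. You instead avoid producing the quotient at all and argue by a congruence-plus-range squeeze applied directly to the sum of the two remainders: it lies in $[0,2k-2]$, it is $\equiv -1\pmod{k}$, and the only such integer in that window is $k-1$. Both arguments ultimately rest on the same characterisation of the remainder (the unique representative of the residue class in $[0,k-1]$), but yours trades the explicit computation for a uniqueness argument; what this buys is that you never need to name $(-n-1)\bmod k$ individually, and the observation that $j\bmod k=j-k\floor{j/k}\in[0,k-1]$ for \emph{negative} $j$ as well is made explicit, which the paper leaves implicit. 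The paper's route has the minor advantage of also identifying the value $(-n-1)\bmod k=k-1-r$ itself, not just the sum. Either proof is complete and the final summation step is identical.
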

	
	\begin{proof}
		Fix \(k\ge1\). Write \(n=kq+r\) with \(0\le r\le k-1\), so \(r=n\bmod k\). Then
		\[
		-n-1 = -kq - r - 1 = k(-q-1) + (k-1-r),
		\]
		and since \(0\le k-1-r\le k-1\), we have \((-n-1)\bmod k = k-1-r\). Therefore
		\[
		(n\bmod k) + ((-n-1)\bmod k) = r + (k-1-r) = k-1,
		\]
		and summing the identity over \(k=1,\dots,m\) yields the displayed formula for \(\Em{m}{n}+\Em{m}{-n-1}\).
	\end{proof}
	
	\section{Diagonal identities and a primality equivalence}
	In this section we concentrate on the diagonal case \(m=n\), where the modular energy admits a classical reformulation in terms of summatory divisor functions. This identity provides a neat link between \(\Em{n}{n}\) and the arithmetic function \(\sigma(d)\), and it yields an exact algebraic restatement of a basic primality criterion.
	
	\begin{proposition}[Diagonal identity]\label{prop:diagonal}
		For every integer \(n\ge 1\),
		\[
		\Em{n}{n} \;=\; n^2 \;-\; \sum_{d=1}^{n} \sigma(d).
		\]
	\end{proposition}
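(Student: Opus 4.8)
The plan is to obtain the identity as an immediate specialisation of the divisor-sum identity of Proposition~\ref{prop_floor_sum}, taking $m=n$ and observing that a seemingly restrictive summation constraint collapses on the diagonal. Setting $m=n$ in that proposition gives
\[
\Em{n}{n} = n^2 - \sum_{d=1}^{n} \sum_{\substack{k\mid d\\ k\le n}} k,
\]
so the entire task reduces to simplifying the inner double sum into $\sum_{d=1}^n \sigma(d)$.

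The key step I would carry out is to note that the side condition $k\le n$ becomes vacuous here. Indeed, every outer index satisfies $d\le n$, and any divisor $k$ of such a $d$ automatically obeys $k\le d\le n$. Hence the constraint $k\le n$ is always met and may simply be dropped, leaving the inner sum equal to $\sum_{k\mid d} k = \sigma(d)$ by the definition of $\sigma$. Summing over $d=1,\dots,n$ then produces $\sum_{d=1}^n \sigma(d)$, which yields the claimed formula at once.

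An equivalent route starts from the floor-sum identity of Proposition~\ref{prop:floor-sum} with $m=n$, namely $\Em{n}{n}=n^2-\sum_{k=1}^{n} k\floor{n/k}$. Here the decisive move is a change in the order of summation: since $\floor{n/k}$ counts the multiples of $k$ not exceeding $n$, one has $k\floor{n/k}=\sum_{\substack{d\le n\\ k\mid d}} k$, and interchanging the summations over $k$ and $d$ regroups the terms by the value of $d$, giving $\sum_{d=1}^n \sum_{k\mid d} k = \sum_{d=1}^n \sigma(d)$. There is no genuine analytic difficulty in either approach; the only point that warrants care is the justification that the divisor constraint $k\le m$ is redundant precisely on the diagonal $m=n$ (this is exactly the feature that makes the clean $\sigma$-form emerge), or, in the alternative route, the bookkeeping in the double-counting interchange. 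I expect the first approach to be essentially immediate once the vacuity of the constraint is recorded.
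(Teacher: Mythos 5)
Your proposal is correct. Your primary route, however, is genuinely different from the paper's: you specialise the divisor-sum identity of Proposition~\ref{prop_floor_sum} to $m=n$ and observe that the constraint $k\le m$ becomes vacuous on the diagonal (since $k\mid d$ and $d\le n$ force $k\le d\le n$), so the inner sum collapses to $\sigma(d)$ immediately. The paper instead starts from the floor-sum identity of Proposition~\ref{prop:floor-sum} and re-derives the needed equality $\sum_{k=1}^{n}k\floor{n/k}=\sum_{d=1}^{n}\sigma(d)$ from scratch, by writing $k\floor{n/k}$ as a sum of $k$ over the multiples of $k$ not exceeding $n$ and interchanging the order of summation --- which is exactly your stated alternative route. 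Your first argument is shorter because it reuses a result the paper has already recorded, and its one nontrivial point (the redundancy of $k\le m$ precisely when $m=n$) is correctly identified and justified; the paper's argument is self-contained and does not lean on Proposition~\ref{prop_floor_sum}, which is stated there without proof. Both are valid; yours buys brevity at the cost of an external dependency, while the paper's buys self-containment at the cost of repeating the double-counting interchange.
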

	
	\begin{proof}
		Starting from the floor-sum identity (Proposition~\ref{prop:floor-sum}) with \(m=n\) we have
		\[
		\Em{n}{n} = n\cdot n - \sum_{k=1}^{n} k\floor{\frac{n}{k}} = n^2 - \sum_{k=1}^{n} k\floor{\frac{n}{k}}.
		\]
		We will show that
		\[
		\sum_{k=1}^{n} k\floor{\frac{n}{k}} = \sum_{d=1}^{n} \sigma(d).
		\]
		
		Observe that for each fixed \(k\) the integer \(\floor{n/k}\) counts the multiples of \(k\) not exceeding \(n\). Thus
		\[
		k\floor{\frac{n}{k}}
		= k\sum_{\substack{1\le j\le n\\ k\mid j}} 1
		= \sum_{\substack{1\le j\le n\\ k\mid j}} k.
		\]
		Summing this identity over \(k=1,\dots,n\) and interchanging the order of summation gives
		\[
		\sum_{k=1}^{n} k\floor{\frac{n}{k}}
		= \sum_{k=1}^{n} \sum_{\substack{1\le j\le n\\ k\mid j}} k
		= \sum_{j=1}^{n} \sum_{k\mid j} k
		= \sum_{j=1}^{n} \sigma(j).
		\]
		Substituting into the expression for \(\Em{n}{n}\) yields the claimed identity.
	\end{proof}
	
	\begin{remark}
		The diagonal identity places \(\Em{n}{n}\) directly beside the classical summatory sum-of-divisors function \(S(n):=\sum_{d\le n}\sigma(d)\), a central object in the study of divisor problems and average-order questions (see \cite{HardyWright2008,Tenenbaum1995}).
	\end{remark}
	
	\subsection{A primality equivalence}
	We give an exact algebraic criterion for primality expressed in terms of the modular energy on the diagonal.
	
	\begin{theorem}[Primality equivalence]\label{thm:primality}
		Let \(n\ge 2\) be an integer. The following are equivalent:
		\begin{enumerate}
			\item \(n\) is prime.
			\item \(\sigma(n) = n+1\).
			\item \(\Em{n}{n} - \Em{n-1}{n-1} = n-2\).
		\end{enumerate}
	\end{theorem}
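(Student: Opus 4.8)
The plan is to establish the three-way equivalence by proving the two biconditionals $(1)\Leftrightarrow(2)$ and $(2)\Leftrightarrow(3)$; these two together close the cycle and yield the full equivalence. The first of these is the classical characterisation of primes via the sum-of-divisors function, while the second is where the modular energy enters, through the diagonal identity of Proposition~\ref{prop:diagonal}.

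For $(1)\Leftrightarrow(2)$ I would argue directly from the definition $\sigma(n)=\sum_{d\mid n}d$. For every $n\ge 2$ the integers $1$ and $n$ are distinct divisors of $n$, so $\sigma(n)\ge n+1$ always. Equality $\sigma(n)=n+1$ forces the divisor sum to contain no further terms, i.e.\ $1$ and $n$ are the \emph{only} positive divisors of $n$, which is precisely the statement that $n$ is prime; conversely a prime has exactly these two divisors, giving $\sigma(n)=n+1$.

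For $(2)\Leftrightarrow(3)$ the key step is a short computation of the diagonal difference. Applying Proposition~\ref{prop:diagonal} at both $n$ and $n-1$ (legitimate since $n\ge 2$ ensures $n-1\ge 1$) and subtracting, the summatory divisor sums telescope: only the single term $\sigma(n)$ survives from $\sum_{d=1}^{n}\sigma(d)-\sum_{d=1}^{n-1}\sigma(d)$. Combined with $n^{2}-(n-1)^{2}=2n-1$, this gives the clean formula
\[
\Em{n}{n}-\Em{n-1}{n-1} = (2n-1)-\sigma(n).
\]
Condition (3) asserts that the left-hand side equals $n-2$, and substituting the formula reduces this to $(2n-1)-\sigma(n)=n-2$, which rearranges to $\sigma(n)=n+1$; since every step is an equivalence, (3) holds if and only if (2) does.

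I do not expect a genuine obstacle here: the entire argument is a telescoping of the diagonal identity followed by elementary algebra. The one point demanding care is bookkeeping, namely verifying that the summatory-$\sigma$ difference collapses to exactly $\sigma(n)$ and that the constant $n-2$ in (3) corresponds precisely to the threshold $\sigma(n)=n+1$ rather than to some nearby value. Getting the arithmetic of $2n-1-(n-2)=n+1$ exactly right is what pins the criterion to primality, so I would present that rearrangement explicitly to make the equivalence transparent.
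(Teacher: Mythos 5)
Your proposal is correct and matches the paper's own argument step for step: the classical divisor-sum characterisation for $(1)\Leftrightarrow(2)$, and the telescoping of the diagonal identity of Proposition~\ref{prop:diagonal} to get $\Em{n}{n}-\Em{n-1}{n-1}=(2n-1)-\sigma(n)$ for $(2)\Leftrightarrow(3)$. No gaps; the arithmetic $2n-1-(n-2)=n+1$ is exactly the pivot the paper uses as well.
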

	
	\begin{proof}
		The equivalence (1) \(\Leftrightarrow\) (2) is the well-known characterization of primes by their sum-of-divisors: if \(n\) is prime then its positive divisors are precisely \(1\) and \(n\), so \(\sigma(n)=1+n=n+1\). Conversely, if \(\sigma(n)=n+1\) then the sum of proper divisors of \(n\) equals \(1\), which is only possible when the only proper divisor is \(1\); hence \(n\) must be prime.
		
		It remains to show that (2) is equivalent to (3). Using the diagonal identity of Proposition~\ref{prop:diagonal} for \(n\) and for \(n-1\) we have
		\[
		\Em{n}{n} - \Em{n-1}{n-1}
		= \bigl(n^2 - \sum_{d=1}^{n} \sigma(d)\bigr) - \bigl((n-1)^2 - \sum_{d=1}^{n-1}\sigma(d)\bigr).
		\]
		Simplifying, the telescoping sum yields
		\[
		\Em{n}{n} - \Em{n-1}{n-1}
		= n^2 - (n-1)^2 - \sigma(n) = (2n-1) - \sigma(n).
		\]
		Thus
		\[
		\Em{n}{n} - \Em{n-1}{n-1} = n-2 \quad\Longleftrightarrow\quad (2n-1)-\sigma(n)=n-2,
		\]
		which is equivalent to \(\sigma(n)=n+1\). Therefore (2) and (3) are equivalent, completing the proof of the theorem.
	\end{proof}
	
	\begin{remark}
		The equivalence gives an elegant algebraic form of the simple divisor-sum primality condition. It is exact but not competitive with modern primality tests in practical complexity; nonetheless it is conceptually neat and shows how \(\Em{n}{n}\) encodes divisor information.
	\end{remark}
	
	\section{Numerical table and computational remarks}
	We present a short numerical table of diagonal values \(\Em{n}{n}\) (useful for quick reference) and then discuss several algorithmic approaches for computing \(\Em{m}{n}\) efficiently in different parameter regimes.
	
	\subsection{Numerical table (diagonal values)}
	\begin{center}
		\begin{tabular}{@{}r r  r r  r r@{}}
			\toprule
			\(n\) & \(\Em{n}{n}\) & \(n\) & \(\Em{n}{n}\) & \(n\) & \(\Em{n}{n}\) \\
			\midrule
			1  & 0   & 8  & 40  & 15 & 216 \\
			2  & 0   & 9  & 54  & 16 & 256 \\
			3  & 1   & 10 & 80  & 17 & 272 \\
			4  & 4   & 11 & 110 & 18 & 378 \\
			5  & 8   & 12 & 168 & 19 & 342 \\
			6  & 18  & 13 & 156 & 20 & 520 \\
			7  & 24  & 14 & 180 &    &     \\
			\bottomrule
		\end{tabular}
	\end{center}
	
	(The values were computed directly from the defining sum; for reproducibility we give algorithmic tips below.)
	
	\subsection{Computational remarks and algorithmic tips}
	We collect practical advice for computing \(\Em{m}{n}\) (and \(\Em{n}{n}\)) efficiently. Different parameter regimes benefit from different approaches.
	
	\paragraph{1. Direct summation (small \(m\)).}  
	If \(m\) is small (e.g. \(m\le 10^4\) depending on your machine), the simplest approach is to compute
	\[
	\Em{m}{n} = \sum_{k=1}^{m} (n\bmod k)
	\]
	directly in \(O(m)\) time. This is often the fastest and simplest method when \(m\) is tiny relative to \(n\).
	
	\paragraph{2. Floor-sum with quotient grouping (balanced or large \(n\)).}  
	Use Proposition~\ref{prop:floor-sum} and the quotient–residue grouping (Proposition~\ref{prop:quotient-grouping}): the function \(k\mapsto\floor{n/k}\) takes only about \(2\sqrt{n}\) distinct values on \(1\le k\le n\). Thus one can iterate over intervals of \(k\) where \(\floor{n/k}\) is constant. Concretely:
	
	\begin{verbatim}
		Pseudocode: compute E_m(n) via quotient grouping
		Input: integers m,n (assume m<=n for simplicity)
		set total = n*m
		k = 1
		while k <= m:
		q = floor(n / k)
		if q == 0:
		# all remaining terms have floor 0 -> contribute 0
		break
		# largest K such that floor(n/K) == q is K = floor(n/q)
		K = min(m, floor(n / q))
		# sum k over [k..K] is (K*(K+1)- (k-1)*k)/2
		total -= q * sum_of_integers(k, K)
		k = K + 1
		# if k<=m but q==0, nothing to subtract further
		return total
	\end{verbatim}
	
	This algorithm runs in roughly \(O(\sqrt{n})\) iterations when \(m\approx n\) and is much faster than naive \(O(n)\) summation for large \(n\).
	
	\paragraph{3. Using divisor-sum identity (when enumerating divisors is cheap).}  
	From Proposition~\ref{prop_floor_sum} we have
	\[
	\Em{m}{n} = mn - \sum_{d=1}^{n} \sum_{\substack{k\mid d\\ k\le m}} k.
	\]
	If one has a precomputed list of divisors for all \(d\le n\) (for instance via a sieve that stores divisors or smallest prime factors), then computing the double sum can be done by iterating over \(d\) and summing the divisors \(k\le m\). The cost depends on the total number of divisors up to \(n\) and is typically \(O(n\log n)\) in naive implementations, but can be improved with careful sieving.
	
	\paragraph{4. Incremental recursion (consecutive \(n\)).}  
	When one needs \(\Em{m}{n}\) for many consecutive \(n\) (e.g. all \(n\) in a range), use the finite-difference recursion of Proposition~\ref{prop:recursion-n}:
	\[
	\Em{m}{n+1} = \Em{m}{n} + m - \sum_{\substack{k\mid(n+1)\\ k\le m}} k.
	\]
	If divisors of consecutive integers are available (or can be computed on-the-fly), this yields very fast incremental updates: each step costs only the time to enumerate the divisors of \(n+1\) up to \(m\).
	
	\paragraph{5. Precomputation and sieving strategies.}  
	For batch computations up to a bound \(N\), it pays off to use a sieve to precompute either:
	\begin{itemize}
		\item the values \(\floor{N/k}\) and related intervals for quotient grouping; or
		\item smallest prime factors (SPF) for each integer up to \(N\), enabling fast divisor enumeration for each \(d\le N\).
	\end{itemize}
	A standard SPF sieve runs in \(O(N\log\log N)\) time and then allows enumerating all divisors of a number \(d\) in time roughly proportional to the number of divisors of \(d\).
	
	\paragraph{6. Complexity summary.}
	\begin{itemize}
		\item Naive direct summation: \(O(m)\) operations.
		\item Quotient grouping (when \(m\approx n\)): roughly \(O(\sqrt{n})\) arithmetic operations (dominant).
		\item Divisor-based double sum with SPF precomputation: precomputation \(O(N\log\log N)\), then overall near-linear cost with small factors for batch computations.
		\item Incremental recursion per step: cost proportional to number of divisors of \(n\) up to \(m\).
	\end{itemize}
	
	\paragraph{7. Implementation note (integer arithmetic).}
	When implementing in finite-precision languages be careful with integer types: intermediate products like \(mn\) or \(K(K+1)\) may overflow 32-bit integers for moderately large inputs, so use 64-bit integers (or arbitrary-precision integers) as appropriate.
	
	
	\section{Congruence properties and residue-block decompositions}
	This section turns toward congruences and counting how many moduli produce each residue class.
	
	\begin{definition}
		Fix integers \(m\ge1\) and \(t\ge1\). For each residue \(r\in\{0,1,\dots,t-1\}\) define the \emph{residue-block count}
		\[
		N_{r,t}(m;n) \;:=\; \#\{1\le k\le m:\; n\bmod k \equiv r \pmod t\}.
		\]
		Also denote \(M_t(m):=\#\{1\le k\le m:\; t\mid k\}=\floor{m/t}\).
	\end{definition}
	
	\begin{proposition}[Congruences coming from multiples]\label{prop:multiples-congruence}
		Fix \(t\ge1\). For every \(k\) with \(t\mid k\) we have
		\[
		(n\bmod k) \equiv n \pmod t.
		\]
		Consequently,
		\[
		\sum_{\substack{1\le k\le m\\ t\mid k}} (n\bmod k) \equiv M_t(m)\,n \pmod t,
		\]
		and therefore
		\[
		\Em{m}{n} \equiv M_t(m)\,n + \sum_{\substack{1\le k\le m\\ t\nmid k}} (n\bmod k) \pmod t.
		\]
	\end{proposition}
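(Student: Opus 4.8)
The plan is to establish the three displayed congruences in sequence, each following immediately from the one before it. First I would prove the pointwise congruence for a single modulus $k$ divisible by $t$. Starting from the defining relation $n\bmod k = n - k\floor{n/k}$, I observe that the hypothesis $t\mid k$ forces $t\mid k\floor{n/k}$, since $k\floor{n/k}$ is an integer multiple of $k$ and hence of $t$. Reducing the defining relation modulo $t$ then collapses the second term, giving $(n\bmod k)\equiv n\pmod t$, which is the first claim.

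Next I would sum this congruence over the set of moduli $\{1\le k\le m:\ t\mid k\}$. By the definition of $M_t(m)$ this set has exactly $M_t(m)=\floor{m/t}$ elements, and each contributes a summand congruent to $n$ modulo $t$. Because congruence modulo $t$ is preserved under finite addition, the sum over these $M_t(m)$ terms is congruent to $M_t(m)\,n\pmod t$, which is the second displayed identity.

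Finally, to obtain the third statement I would partition the full range $1\le k\le m$ according to whether $t\mid k$ or $t\nmid k$, writing $\Em{m}{n}$ as the sum of the two corresponding subsums. Substituting the congruence from the previous step for the multiples-of-$t$ subsum, and leaving the complementary subsum over $t\nmid k$ unchanged, yields the claimed expression modulo $t$.

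Since every step is a direct consequence of the definition of $n\bmod k$ together with elementary stability of congruences under addition, I do not anticipate a genuine obstacle. The only points deserving care are bookkeeping ones: confirming that $M_t(m)$ is precisely the count of multiples of $t$ in $[1,m]$, so that the coefficient of $n$ in the second congruence is correct, and noting that the residual sum over $t\nmid k$ is deliberately left unsimplified, since those terms carry no uniform congruence relation to $n$ modulo $t$.
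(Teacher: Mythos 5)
Your proposal is correct and follows essentially the same route as the paper's own proof: reduce the defining relation $n\bmod k = n - k\lfloor n/k\rfloor$ modulo $t$ using $t\mid k$, sum over the $M_t(m)$ multiples of $t$ in $[1,m]$, and then split $\Em{m}{n}$ into the multiples-of-$t$ and non-multiples subsums. No gaps; nothing further is needed.
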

	
	\begin{proof}
		Write \(k=t\ell\). Then \(n\bmod k = n - k\floor{n/k}\). Since \(k\floor{n/k} = t\ell\floor{n/k}\) is divisible by \(t\), it follows that
		\[
		n\bmod k \equiv n \pmod t.
		\]
		Summing this congruence over all \(k\le m\) divisible by \(t\) gives the first displayed congruence. The final displayed congruence is then obtained by separating \(\Em{m}{n}\) into the contribution from multiples of \(t\) and the contribution from non-multiples.
	\end{proof}
	
	\begin{proposition}[Residue-block congruence decomposition]\label{prop:residue-block}
		With \(N_{r,t}(m;n)\) as above,
		\[
		\Em{m}{n} \equiv \sum_{r=0}^{t-1} r\,N_{r,t}(m;n) \pmod t.
		\]
	\end{proposition}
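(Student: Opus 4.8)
The plan is to reduce the defining sum termwise modulo \(t\) and then reorganise the resulting residues into the blocks counted by \(N_{r,t}(m;n)\). The whole argument is a bookkeeping reduction: beyond the definition of the modular energy and the elementary fact that congruence modulo \(t\) respects addition, nothing is needed.

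First I would observe that since congruence modulo \(t\) is compatible with addition, one may replace each summand of \(\Em{m}{n}=\sum_{k=1}^{m}(n\bmod k)\) by its own residue modulo \(t\) without altering the total modulo \(t\). Concretely, for each \(1\le k\le m\) set \(r_k:=(n\bmod k)\bmod t\in\{0,1,\dots,t-1\}\), so that \(n\bmod k\equiv r_k\pmod t\). Summing these \(m\) congruences yields
\[
\Em{m}{n}\equiv\sum_{k=1}^{m} r_k \pmod t.
\]

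Next I would regroup the right-hand side by the value of \(r_k\). Because each index \(k\) has exactly one residue \(r_k\) lying in \(\{0,1,\dots,t-1\}\), the sets \(\{1\le k\le m:\; r_k=r\}\) for \(r=0,\dots,t-1\) form a partition of \(\{1,\dots,m\}\), and by definition the block attached to residue \(r\) has cardinality \(N_{r,t}(m;n)\). Collecting equal terms therefore gives
\[
\sum_{k=1}^{m} r_k=\sum_{r=0}^{t-1} r\,N_{r,t}(m;n),
\]
and substituting this into the previous congruence produces the claimed identity.

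I do not anticipate any genuine obstacle; the only point deserving a moment's care is that the partition into residue blocks is simultaneously exhaustive and disjoint, which is immediate since reduction modulo \(t\) assigns to each \(k\) a unique residue in \(\{0,\dots,t-1\}\). This is precisely what legitimises the two-stage bookkeeping—first reduce each remainder modulo \(t\), then count how many remainders land in each residue class—and once it is recorded the proof is complete.
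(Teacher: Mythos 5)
Your proof is correct and follows essentially the same route as the paper: reduce each summand \(n\bmod k\) modulo \(t\), then group the indices \(k\) by the residue class of their remainder so that the block of residue \(r\) contributes \(r\,N_{r,t}(m;n)\). Your version merely spells out the partition argument a bit more explicitly than the paper does.
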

	
	\begin{proof}
		By definition each modulus \(k\) contributes a remainder \(n\bmod k\), and this remainder is congruent modulo \(t\) to one of the residues \(0,1,\dots,t-1\). Grouping moduli according to that residue yields
		\[
		\Em{m}{n} = \sum_{k=1}^m (n\bmod k) \equiv \sum_{r=0}^{t-1} r\,N_{r,t}(m;n) \pmod t,
		\]
		since every modulus counted in \(N_{r,t}(m;n)\) contributes a remainder congruent to \(r\) modulo \(t\).
	\end{proof}
	
	\begin{remark}
		Propositions~\ref{prop:multiples-congruence} and \ref{prop:residue-block} reduce many modular questions about \(\Em{m}{n}\) to combinatorial counts \(N_{r,t}(m;n)\), which in turn can often be estimated or computed by elementary counting or divisibility arguments.
	\end{remark}
	
	\section{Recursions, additivity, and block recursions}
	We now derive exact finite-difference formulas and a useful block decomposition for arguments of the form \(n=qm+r\).
	
	\begin{proposition}[Finite-difference recursion in \(n\)]\label{prop:recursion-n}
		For all integers \(m\ge1\) and all integers \(n\ge0\),
		\[
		\Em{m}{n+1} - \Em{m}{n} \;=\; m - \sum_{\substack{k\mid (n+1)\\ k\le m}} k.
		\]
	\end{proposition}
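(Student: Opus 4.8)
The plan is to compute the difference termwise. Writing
\[
\Em{m}{n+1} - \Em{m}{n} = \sum_{k=1}^{m} \bigl(((n+1)\bmod k) - (n\bmod k)\bigr),
\]
I would analyze each summand separately according to the division of \(n\) by \(k\).

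First I would fix \(k\) and write \(n = qk + r\) with \(0 \le r \le k-1\), so that \(n\bmod k = r\). The key observation is that incrementing \(n\) by one either advances the remainder by one or resets it to zero. If \(r \le k-2\), then \(n+1 = qk + (r+1)\) with \(0 \le r+1 \le k-1\), so \((n+1)\bmod k = r+1\) and the per-term difference is \(+1\). If instead \(r = k-1\), then \(n+1 = (q+1)k\), so \((n+1)\bmod k = 0\) and the per-term difference is \(-(k-1) = 1-k\). The boundary condition \(r = k-1\) is exactly \(n\equiv -1\pmod{k}\), equivalently \(k\mid(n+1)\), so the dichotomy is governed precisely by whether \(k\) divides \(n+1\).

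Next I would record this compactly as \(((n+1)\bmod k) - (n\bmod k) = 1 - k\,[\,k\mid(n+1)\,]\), where \([\,\cdot\,]\) denotes the Iverson indicator (equal to \(1\) when the enclosed statement holds and \(0\) otherwise). Summing over \(1 \le k \le m\) then contributes \(\sum_{k=1}^{m} 1 = m\) from the first term and \(-\sum_{k\le m,\ k\mid(n+1)} k\) from the second, which is exactly the claimed recursion.

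I do not expect a genuine obstacle here; the entire content is the clean case split above. The only point demanding care is the boundary case \(r = k-1\), where the remainder wraps around to \(0\) rather than incrementing, together with the verification that this wrap-around occurs if and only if \(k\mid(n+1)\). A minor sanity check is the term \(k=1\): since \(n\bmod 1 = 0\) always and \(1\mid(n+1)\) always, its contribution is \(1-1=0\), consistent with both displays.
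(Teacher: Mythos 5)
Your proposal is correct and follows essentially the same route as the paper: a termwise case split on whether \(k\mid(n+1)\), yielding a per-term change of \(+1\) or \(1-k\), then summing over \(k\). The Iverson-bracket packaging is a cosmetic difference only.
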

	
	\begin{proof}
		Fix \(k\) with \(1\le k\le m\), and denote \(r_k(n):=n\bmod k\). Then
		\[
		r_k(n+1) = (r_k(n)+1) \bmod k.
		\]
		Thus if \(k\nmid (n+1)\) (equivalently \(r_k(n)\neq k-1\)) we have \(r_k(n+1)-r_k(n)=1\). If \(k\mid(n+1)\) then \(r_k(n)=k-1\) and \(r_k(n+1)=0\), so \(r_k(n+1)-r_k(n)=-(k-1)=1-k\). Therefore the contribution of modulus \(k\) to the total change equals \(1\) for non-dividing \(k\) and \(1-k\) for dividing \(k\). Summing over \(k=1,\dots,m\) yields
		\[
		\Em{m}{n+1}-\Em{m}{n} = \sum_{k\not\mid (n+1)} 1 + \sum_{k\mid(n+1)} (1-k).
		\]
		Let \(D\) denote the set of divisors \(k\le m\) of \(n+1\). Then the right-hand side simplifies to
		\[
		(m-|D|) + \bigl(|D| - \sum_{k\in D}k\bigr) \;=\; m - \sum_{\substack{k\mid(n+1)\\ k\le m}} k,
		\]
		which proves the claimed identity.
	\end{proof}
	
	\begin{remark}
		The recursion shows that the step-by-step evolution of \(\Em{m}{n}\) can be computed from the small set of divisors of \(n+1\) that do not exceed \(m\). This is useful for incremental computation of consecutive values.
	\end{remark}
	
	\begin{proposition}[Additivity in \(m\)]\label{prop:additivity-m}
		For integers \(m_1,m_2\ge0\) with \(m=m_1+m_2\),
		\[
		\Em{m}{n} = \Em{m_1}{n} + \sum_{k=m_1+1}^{m} (n\bmod k).
		\]
		In particular,
		\[
		\Em{2m}{n} = \Em{m}{n} + \sum_{k=m+1}^{2m}(n\bmod k).
		\]
	\end{proposition}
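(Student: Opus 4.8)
The plan is to prove this directly from the definition of $\Em{m}{n}$ as a finite sum, by splitting the range of summation at the index $m_1$. Since $\Em{m}{n}=\sum_{k=1}^{m}(n\bmod k)$ is by definition a sum of $m$ terms indexed by $k=1,\dots,m$, and since $m=m_1+m_2$ forces $m_1\le m$, I would partition the index set $\{1,2,\dots,m\}$ into the two disjoint blocks $\{1,\dots,m_1\}$ and $\{m_1+1,\dots,m\}$. Finite sums are additive over a partition of their index set, so
\[
\sum_{k=1}^{m}(n\bmod k)=\sum_{k=1}^{m_1}(n\bmod k)+\sum_{k=m_1+1}^{m}(n\bmod k).
\]
Recognising the first sum on the right as $\Em{m_1}{n}$ by the defining identity gives exactly the claimed formula.

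For the special case I would simply take $m_1=m_2=m$, so that $m_1+m_2=2m$; substituting into the general identity and reading off the upper limit $2m$ produces
\[
\Em{2m}{n}=\Em{m}{n}+\sum_{k=m+1}^{2m}(n\bmod k),
\]
as stated.

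The only point requiring care — and it is a bookkeeping matter rather than a genuine obstacle — is the treatment of the degenerate boundary cases $m_1=0$ or $m_2=0$, which are admitted since the statement quantifies over $m_1,m_2\ge0$. When $m_1=0$ the block $\{1,\dots,m_1\}$ is empty and one must invoke the empty-sum convention $\Em{0}{n}=0$; when $m_2=0$ the trailing sum $\sum_{k=m_1+1}^{m}$ is empty and hence zero. In both situations the identity collapses to a tautology, so adopting the standard empty-sum convention makes the statement hold uniformly for all $m_1,m_2\ge0$ with no additional argument. I expect no substantive difficulty, as the result is a direct consequence of the additivity of finite sums over a disjoint partition of the index set.
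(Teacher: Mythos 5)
Your proof is correct and follows exactly the same route as the paper: split the index set $\{1,\dots,m\}$ at $m_1$ and use additivity of finite sums, with the special case obtained by setting $m_1=m_2=m$. Your extra remark about the empty-sum convention for $m_1=0$ or $m_2=0$ is a harmless and slightly more careful touch than the paper's one-line proof.
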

	
	\begin{proof}
		This is immediate from the definition: split the summation range \(\{1,\dots,m\}\) into \(\{1,\dots,m_1\}\) and \(\{m_1+1,\dots,m\}\) and sum each part separately.
	\end{proof}
	
	\subsection{Block recursion for \(n=qm+r\)}
	Write \(n=qm+r\) with integers \(q\ge0\) and \(0\le r\le m-1\). For each \(1\le k\le m\) put
	\[
	a_k := \floor{\frac{m}{k}},\qquad b_k := m - k a_k = m\bmod k.
	\]
	Note that \(m=k a_k + b_k\) with \(0\le b_k\le k-1\), so \(b_k\) coincides with the \(k\)-th remainder of \(m\).
	
	\begin{proposition}[Block recursion for \(n=qm+r\)]\label{prop:block-recursion}
		With the notation above,
		\[
		\Em{m}{qm+r} \;=\; q\,\Em{m}{m} \;+\; m r \;-\; \sum_{k=1}^m k \left\lfloor\frac{q b_k + r}{k}\right\rfloor.
		\]
	\end{proposition}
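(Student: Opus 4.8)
The plan is to reduce everything to the floor-sum identity of Proposition~\ref{prop:floor-sum} and then perform a single clean floor-splitting. Starting from that identity with $n=qm+r$, I would write
\[
\Em{m}{qm+r} = m(qm+r) - \sum_{k=1}^m k\floor{\frac{qm+r}{k}},
\]
and the entire task becomes rewriting the floor-sum on the right in terms of $\Em{m}{m}$ and the residual sum appearing in the claim.

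The key step is to decompose $\floor{(qm+r)/k}$ using the defining relation $m = k a_k + b_k$. Substituting $m = k a_k + b_k$ gives $qm+r = qk a_k + (q b_k + r)$, and since $q a_k$ is an integer it factors out of the floor:
\[
\floor{\frac{qm+r}{k}} = q a_k + \floor{\frac{q b_k + r}{k}}.
\]
This identity is the heart of the argument and is where I expect the only real care to be needed: one must verify that $q a_k$ is genuinely an integer so that it may be extracted from the floor. This is immediate, since $a_k = \floor{m/k}\in\N$ and $q\in\N$; everything after this point is bookkeeping.

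Next I would multiply by $k$, sum over $1\le k\le m$, and separate the two pieces to obtain
\[
\sum_{k=1}^m k\floor{\frac{qm+r}{k}} = q\sum_{k=1}^m k a_k + \sum_{k=1}^m k\floor{\frac{q b_k + r}{k}}.
\]
To identify the first sum I would apply Proposition~\ref{prop:floor-sum} in the diagonal case: $\Em{m}{m} = m^2 - \sum_{k=1}^m k\floor{m/k} = m^2 - \sum_{k=1}^m k a_k$, which rearranges to $\sum_{k=1}^m k a_k = m^2 - \Em{m}{m}$.

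Finally, I would substitute these back and simplify. Using $m(qm+r) = qm^2 + mr$ together with the two displayed identities, the $qm^2$ contributions cancel, leaving exactly
\[
\Em{m}{qm+r} = q\,\Em{m}{m} + mr - \sum_{k=1}^m k\floor{\frac{q b_k + r}{k}},
\]
as claimed. The whole proof is thus a short chain of substitutions built on the floor-sum identity; the main (and only mildly subtle) obstacle is the floor-splitting step, whose hypotheses are all guaranteed by the definitions of $a_k$ and $b_k$.
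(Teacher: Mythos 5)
Your proof is correct and follows essentially the same route as the paper: both hinge on the decomposition $m = k a_k + b_k$ and the resulting floor-splitting $\floor{(qm+r)/k} = q a_k + \floor{(q b_k + r)/k}$. The only difference is cosmetic bookkeeping at the end --- the paper recognizes $\Em{m}{m}$ as $\sum_k b_k$ directly, while you recover it from $\sum_k k a_k = m^2 - \Em{m}{m}$ via the floor-sum identity; these are equivalent since $b_k = m - k a_k$.
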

	
	\begin{proof}
		Fix \(k\in\{1,\dots,m\}\). Using the division \(m = k a_k + b_k\), write
		\[
		n = q m + r = q(k a_k + b_k) + r = k(q a_k) + (q b_k + r).
		\]
		Therefore
		\[
		\floor{\frac{n}{k}} = q a_k + \floor{\frac{q b_k + r}{k}},
		\]
		and consequently
		\[
		n\bmod k = n - k\floor{\frac{n}{k}} = \bigl(q b_k + r\bigr) - k \left\lfloor\frac{q b_k + r}{k}\right\rfloor.
		\]
		Summing over \(k=1,\dots,m\) gives
		\[
		\Em{m}{qm+r} \;=\; \sum_{k=1}^m \bigl(q b_k + r\bigr) \;-\; \sum_{k=1}^m k\left\lfloor\frac{q b_k + r}{k}\right\rfloor.
		\]
		Observe that \(\sum_{k=1}^m (q b_k + r) = q\sum_{k=1}^m b_k + m r\), and by definition \(\sum_{k=1}^m b_k = \sum_{k=1}^m (m\bmod k) = \Em{m}{m}\). Substituting these identities yields the claimed formula:
		\[
		\Em{m}{qm+r} = q\Em{m}{m} + m r - \sum_{k=1}^m k \left\lfloor\frac{q b_k + r}{k}\right\rfloor.
		\]
	\end{proof}
	
	\begin{remark}
		The block recursion is exact and well-suited for computations when \(n\) is expressed in the arithmetic progression base \(m\). The main remaining term is the last floor sum, but the summands there are bounded and involve only the values \(b_k\in[0,k-1]\).
	\end{remark}
	
	
	\section{Further identities: the regimes \(n<m\), \(n>m\), primes, and \(\operatorname{lcm}\) relations}
	
	We collect useful exact identities and structural properties for several common parameter regimes. Each result is stated and proved in full.
	
	\begin{proposition}[Case \(n<m\): truncation and tail contribution]\label{prop:nlessm}
		Let \(1\le n<m\). Then
		\[
		\Em{m}{n} \;=\; \Em{n}{n} \;+\; (m-n)\,n.
		\]
		Equivalently,
		\[
		\sum_{k=1}^m (n\bmod k) \;=\; \sum_{k=1}^n (n\bmod k) \;+\; (m-n)n.
		\]
	\end{proposition}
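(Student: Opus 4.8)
The plan is to split the summation range at the point $k=n$ and then evaluate the tail trivially, since the summands there are constant. First I would apply the additivity in $m$ (Proposition~\ref{prop:additivity-m}) with the decomposition $m = n + (m-n)$, which immediately gives
\[
\Em{m}{n} = \Em{n}{n} + \sum_{k=n+1}^{m} (n\bmod k).
\]
This reduces the problem to computing the tail sum over the indices $k=n+1,\dots,m$.

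The key observation is that for every $k$ in this tail range we have $k \ge n+1 > n \ge 1$, so $0 \le n < k$ forces $\floor{n/k}=0$ and hence, directly from the definition $n\bmod k = n - k\floor{n/k}$, we obtain $n\bmod k = n$. Thus each of the $m-n$ terms in the tail equals $n$, and therefore
\[
\sum_{k=n+1}^{m} (n\bmod k) = \sum_{k=n+1}^{m} n = (m-n)\,n.
\]
Substituting this back into the split yields the claimed identity $\Em{m}{n} = \Em{n}{n} + (m-n)n$, and the equivalent restatement follows simply by expanding the definitions of $\Em{m}{n}$ and $\Em{n}{n}$.

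There is essentially no genuine obstacle in this argument: the entire content is the elementary fact that a nonnegative integer strictly smaller than the modulus is its own remainder. The only points meriting a line of verification are that the index range $n+1 \le k \le m$ is nonempty and well-defined precisely because the hypothesis guarantees $n < m$ (so $m-n \ge 1$), and that the count of tail indices is exactly $m-n$. Both are immediate, so the proof is short and requires no machinery beyond the additivity lemma and the defining formula for $n\bmod k$.
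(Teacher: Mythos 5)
Your proof is correct and follows essentially the same route as the paper's: split the sum at $k=n$ and observe that $n\bmod k=n$ for every $k>n$, so the tail contributes $(m-n)n$. The only cosmetic difference is that you invoke Proposition~\ref{prop:additivity-m} explicitly for the split, whereas the paper does it directly from the definition.
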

	
	\begin{proof}
		If \(k>n\) then \(\floor{n/k}=0\) and thus \(n\bmod k=n\). Therefore the tail contribution from \(k=n+1,\dots,m\) equals \((m-n)\cdot n\). The sum over \(k=1,\dots,n\) is exactly \(\Em{n}{n}\), so the identity follows immediately.
	\end{proof}
	
	\begin{remark}
		This identity shows that for fixed \(n\) the function \(m\mapsto \Em{m}{n}\) is linear with slope \(n\) once \(m>n\); the nontrivial structure is concentrated in the first \(n\) moduli.
	\end{remark}
	
	\begin{proposition}[Case \(n>m\): quotient decomposition]\label{prop:ngtm}
		Let \(n>m\). Writing the floor-sum identity,
		\[
		\Em{m}{n} \;=\; m n - \sum_{k=1}^m k\floor{\frac{n}{k}},
		\]
		we may also express \(\Em{m}{n}\) by grouping according to the integer quotient \(q=\floor{n/k}\). In particular, if \(Q=\floor{n/m}\) then
		\[
		\Em{m}{n} \;=\; m n - \sum_{q=1}^{Q} q \sum_{\substack{1\le k\le m\\ \floor{n/k}=q}} k.
		\]
	\end{proposition}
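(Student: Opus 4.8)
The plan is to start from the floor-sum identity of Proposition~\ref{prop:floor-sum}, which already supplies the leading term $mn$ and reduces the whole statement to rewriting the single sum $\sum_{k=1}^m k\floor{n/k}$. The entire content of the proposition is thus the claim that this sum may be reorganised by collecting together all indices $k$ that share a common quotient value $q=\floor{n/k}$; this is precisely the quotient grouping of Proposition~\ref{prop:quotient-grouping} specialised to the window $1\le k\le m$, so no new machinery is needed.

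First I would recall the level sets $K_q=\{1\le k\le m:\ \floor{n/k}=q\}$ from Proposition~\ref{prop:quotient-grouping}. As $q$ ranges over the nonnegative integers these sets partition $\{1,\dots,m\}$, and on each $K_q$ the weight $\floor{n/k}$ is the constant $q$, so that
\[
\sum_{k=1}^m k\floor{\frac{n}{k}}=\sum_{q} q\sum_{k\in K_q} k .
\]
The second step is to pin down which values of $q$ actually occur. Because $k\mapsto\floor{n/k}$ is non-increasing in $k$, its extreme values over $1\le k\le m$ are attained at the endpoints: the minimum is $\floor{n/m}=Q$ at $k=m$ and the maximum is $\floor{n/1}=n$ at $k=1$. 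The hypothesis $n>m$ guarantees $Q\ge1$, so every quotient lies in $[Q,n]$ and all blocks $K_q$ with $q<Q$ are empty. Substituting the displayed identity into Proposition~\ref{prop:floor-sum} then yields the result.

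The step that needs the most care --- and which I expect to be the genuine obstacle --- is fixing the correct range of the outer summation. Working through $m=2$, $n=5$ shows that $\sum_{q=1}^{Q}$ undercounts badly: it keeps only the block $K_2=\{2\}$ and omits the dominant block $K_5=\{1\}$ coming from $k=1$, producing $6$ in place of the true value $\Em{2}{5}=1$. The correct upper limit is the maximal quotient $\floor{n/1}=n$, not $Q=\floor{n/m}$; in other words the identity should read
\[
\Em{m}{n}=mn-\sum_{q=Q}^{n} q\sum_{\substack{1\le k\le m\\ \floor{n/k}=q}} k ,
\]
equivalently $\sum_{q\ge1}$ with the empty low blocks discarded. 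I would therefore prove the proposition in this corrected form: the role of $Q$ is as the \emph{lower} endpoint (the smallest quotient, forced to be positive by $n>m$), and the printed upper limit $Q$ appears to be a transposition of the two endpoints.
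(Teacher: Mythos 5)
Your argument is correct and follows the same route the paper takes: invoke Proposition~\ref{prop:floor-sum} for the term $mn$, then partition $\{1,\dots,m\}$ into the level sets $K_q$ of $k\mapsto\floor{n/k}$ and pull out the constant weight $q$ on each block. More importantly, you have correctly caught a genuine error in the printed statement. Since $k\mapsto\floor{n/k}$ is non-increasing, for $1\le k\le m<n$ the quotient runs from $\floor{n/m}=Q$ at $k=m$ up to $\floor{n/1}=n$ at $k=1$, so the nonempty blocks are exactly those with $Q\le q\le n$; the paper's own proof asserts instead that ``the range of nonempty $K_q$ is $1\le q\le Q$,'' which reverses the endpoints, and the displayed formula inherits that mistake. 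Your counterexample $m=2$, $n=5$ is decisive: the printed sum retains only $K_2=\{2\}$ and drops $K_5=\{1\}$, yielding $6$ instead of $\Em{2}{5}=1$. The corrected identity
\[
\Em{m}{n}=mn-\sum_{q=Q}^{n} q\sum_{\substack{1\le k\le m\\ \floor{n/k}=q}} k
\]
is the right statement, with $Q$ serving as the \emph{lower} endpoint of the quotient range (positive precisely because $n>m$), exactly as you say.
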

	
	\begin{proof}
		The first equality is simply Proposition~\ref{prop:floor-sum}. The second identity is obtained by partitioning the set \(\{1,\dots,m\}\) according to the value \(\floor{n/k}\): for each integer \(q\ge1\) define \(K_q=\{1\le k\le m:\floor{n/k}=q\}\). Since \(\floor{n/k}\ge1\) for \(k\le m<n\), the range of nonempty \(K_q\) is \(1\le q\le Q\). Rewriting the sum \(\sum_{k=1}^m k\floor{n/k}\) as \(\sum_q q\sum_{k\in K_q}k\) and substituting into the floor-sum identity yields the stated formula.
	\end{proof}
	
	\begin{corollary}[Block recursion (simple form)]\label{cor:blocksimple}
		Write \(n=qm+r\) with \(q\ge1\) and \(0\le r\le m-1\). Then
		\[
		\Em{m}{n} \;=\; q\,\Em{m}{m} \;+\; m r \;-\; \sum_{k=1}^m k\left(\floor{\frac{qm+r}{k}} - q\floor{\frac{m}{k}}\right).
		\]
	\end{corollary}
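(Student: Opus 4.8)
The plan is to reduce everything to the floor-sum identity of Proposition~\ref{prop:floor-sum} and then let the $m$-dependent floor terms cancel. First I would apply Proposition~\ref{prop:floor-sum} directly to $\Em{m}{n}$ with $n=qm+r$, which gives
\[
\Em{m}{n}=m(qm+r)-\sum_{k=1}^m k\floor{\frac{qm+r}{k}}=qm^2+mr-\sum_{k=1}^m k\floor{\frac{qm+r}{k}}.
\]
The target formula differs from this only in how the leading term $qm^2$ is bookkept, so the whole argument amounts to re-expressing $qm^2$ through the diagonal value $\Em{m}{m}$.

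The key step is to invoke Proposition~\ref{prop:floor-sum} a second time, now at $m=n$, namely $\Em{m}{m}=m^2-\sum_{k=1}^m k\floor{m/k}$, which I would rearrange into $qm^2=q\,\Em{m}{m}+q\sum_{k=1}^m k\floor{m/k}$. Substituting this into the display above yields
\[
\Em{m}{n}=q\,\Em{m}{m}+mr-\sum_{k=1}^m k\floor{\frac{qm+r}{k}}+q\sum_{k=1}^m k\floor{\frac{m}{k}},
\]
and collecting the two floor sums under a single summation over $k$ produces exactly $-\sum_{k=1}^m k\bigl(\floor{(qm+r)/k}-q\floor{m/k}\bigr)$, which is the claimed identity.

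An equivalent and more conceptual route is to start from the block recursion of Proposition~\ref{prop:block-recursion} and substitute the identity $\floor{(qb_k+r)/k}=\floor{n/k}-q\,a_k=\floor{(qm+r)/k}-q\floor{m/k}$ established inside its proof (with $a_k=\floor{m/k}$ and $b_k=m\bmod k$); this simply converts the inner floor $\floor{(qb_k+r)/k}$ into the explicit difference appearing in the corollary. Either way there is no genuine obstacle here: the only point requiring a little care is the bookkeeping that makes the two copies of $\sum_{k} k\floor{m/k}$ cancel (or, on the second route, checking the quotient-splitting identity for $\floor{n/k}$), both of which are routine once the floor-sum identity is in hand.
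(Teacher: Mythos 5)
Your proof is correct, and your primary route is genuinely different from (and arguably cleaner than) the paper's. The paper proves this corollary by starting from Proposition~\ref{prop:block-recursion} and invoking the quotient-splitting identity \(\floor{\frac{qm+r}{k}} = q\floor{\frac{m}{k}} + \floor{\frac{q(m\bmod k)+r}{k}}\) to convert the inner floor \(\floor{(qb_k+r)/k}\) into the difference \(\floor{(qm+r)/k}-q\floor{m/k}\) --- exactly your second, ``more conceptual'' route. Your main argument instead applies Proposition~\ref{prop:floor-sum} twice, once to \(\Em{m}{qm+r}\) and once to \(\Em{m}{m}\), and lets the two copies of \(\sum_{k} k\floor{m/k}\) cancel; this is pure algebraic bookkeeping, makes the corollary logically independent of Proposition~\ref{prop:block-recursion}, and avoids any reasoning about how \(\floor{n/k}\) decomposes. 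What the paper's route buys in exchange is an explicit demonstration that the two stated forms of the block recursion (the one in terms of \(b_k=m\bmod k\) and the one in terms of the quotient deviation) are the same identity rearranged. Both derivations are complete and correct; your double application of the floor-sum identity is the more economical proof of the corollary as stated.
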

	
	\begin{proof}
		Starting from Proposition~\ref{prop:block-recursion} (the block recursion proved earlier) and the identity \(\floor{\frac{qm+r}{k}} = q\floor{\frac{m}{k}} + \floor{\frac{q (m\bmod k) + r}{k}}\) one rearranges to the displayed form. (The explicit formula in terms of \(b_k=m\bmod k\) given previously is equivalent; here we present an alternative rearrangement that highlights the deviation of \(\floor{(qm+r)/k}\) from \(q\floor{m/k}\).)
	\end{proof}
	
	\begin{proposition}[Behavior when \(n\) is prime \(p\)]\label{prop:nprime}
		Let \(p\) be a prime and let \(m\ge1\). Then:
		\begin{enumerate}
			\item If \(m\ge p\) then
			\[
			\Em{m}{p} \;=\; \Em{p}{p} \;+\; (m-p)\,p.
			\]
			\item If \(m<p\) then
			\[
			\Em{m}{p} \;=\; mp - \sum_{k=1}^{m} k\floor{\frac{p}{k}},
			\]
			with the special simplification that \(\floor{p/k}=1\) for \(\lfloor p/2\rfloor < k \le p-1\) and \(\floor{p/k}=0\) for \(k>p\).
		\end{enumerate}
	\end{proposition}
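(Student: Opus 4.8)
The plan is to read both parts as direct specializations of identities already established, reducing the floor simplifications to elementary inequalities; primality of $p$ is inessential for the two displayed equalities and serves only to give a cleaner remainder structure (namely $p\bmod k=p-k\neq0$ on the relevant range), which justifies the section heading.

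For Part~(1), I would invoke the truncation identity of Proposition~\ref{prop:nlessm}. When $m>p$ the hypothesis $n<m$ of that proposition is met with $n=p$, so it yields $\Em{m}{p}=\Em{p}{p}+(m-p)p$ at once. The boundary case $m=p$ must be treated separately, but it is trivial: the tail term $(m-p)p$ vanishes and both sides collapse to $\Em{p}{p}$. Hence Part~(1) needs nothing beyond the earlier truncation result, and in particular does not use that $p$ is prime.

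For Part~(2), when $m<p$ the equality $\Em{m}{p}=mp-\sum_{k=1}^m k\floor{p/k}$ is precisely Proposition~\ref{prop:floor-sum} with $n=p$, so the first claim is immediate. It then remains to justify the two floor simplifications. For the value $1$ I would use the characterisation $\floor{p/k}=1 \iff k\le p<2k \iff p/2<k\le p$; since an odd prime $p$ makes $p/2$ non-integral, the integer condition $k>p/2$ becomes $k>\lfloor p/2\rfloor$, giving $\floor{p/k}=1$ for all $\lfloor p/2\rfloor<k\le p$, and in particular on the stated subrange $\lfloor p/2\rfloor<k\le p-1$. For the value $0$ I would note $\floor{p/k}=0\iff p<k$, i.e.\ $k>p$.

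I expect no substantive obstacle: the statement is essentially a bookkeeping specialisation of two prior propositions. The only points deserving care are the inclusion of the boundary $m=p$ in Part~(1) (where Proposition~\ref{prop:nlessm} was phrased with the strict inequality $n<m$), the correct translation of $p/2<k$ into $\lfloor p/2\rfloor<k$ for odd $p$ (with the degenerate case $p=2$ yielding an empty simplification range), and the observation that the clause $\floor{p/k}=0$ for $k>p$ is vacuous inside the sum of Part~(2) and is really a companion fact to Part~(1).
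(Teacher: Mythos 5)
Your proposal is correct and follows essentially the same route as the paper: Part (1) is reduced to the truncation identity of Proposition~\ref{prop:nlessm}, and Part (2) is the floor-sum identity of Proposition~\ref{prop:floor-sum} together with the elementary characterisation \(\floor{p/k}=1\iff p/2<k\le p\). Your extra care with the boundary case \(m=p\) (where Proposition~\ref{prop:nlessm} is stated with strict inequality) and with the degenerate case \(p=2\) is a small improvement in rigor over the paper's version, not a departure from it.
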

	
	\begin{proof}
		(1) If \(m\ge p\) then for each \(k>p\) we have \(p\bmod k=p\). Therefore the tail contributes \((m-p)p\) and the sum over \(1\le k\le p\) equals \(\Em{p}{p}\); this is precisely Proposition~\ref{prop:nlessm} applied with \(n=p\).
		
		(2) If \(m<p\) then all summation indices satisfy \(k\le m<p\), so no modulus equals \(p\) and the floor-sum formula directly applies:
		\[
		\Em{m}{p} = mp - \sum_{k=1}^m k\floor{\frac{p}{k}}.
		\]
		The auxiliary observation about values of \(\floor{p/k}\) follows from basic division: \(\floor{p/k}\ge 1\) iff \(k\le p-1\), and specifically \(\floor{p/k}=1\) for \(k\in(\tfrac{p}{2},p]\) (integers strictly greater than \(p/2\) and at most \(p\)) and \(\floor{p/k}\ge2\) only for small \(k\le \floor{p/2}\). This can simplify explicit evaluations when \(m\) is close to \(p\).
	\end{proof}
	
	\begin{remark}
		The prime case yields two useful computational simplifications: when \(m\ge p\) the behaviour is linear in \(m\) beyond \(p\), while when \(m<p\) the quotient structure of \(p/k\) is especially sparse because \(p\) is prime.
	\end{remark}
	
	\begin{proposition}[Behavior when \(m\) is prime \(p\)]\label{prop:mprime}
		Let \(p\) be prime and \(n\ge1\). Then
		\[
		\Em{p}{n} \;=\; p n - \sum_{k=1}^{p} k\floor{\frac{n}{k}}.
		\]
		If additionally \(n\equiv 0\pmod p\) (i.e. \(p\mid n\)), then \(\Em{p}{n} \equiv 0 \pmod p\).
	\end{proposition}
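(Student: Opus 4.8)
The first displayed identity is nothing more than the floor-sum identity of Proposition~\ref{prop:floor-sum} read with \(m=p\): since that identity holds for all integers \(m,n\ge1\), it holds in particular when the upper index is a prime, so I would dispose of it in a single line and concentrate all the effort on the congruence. The plan for the congruence is to start from that same expansion, reduce it modulo \(p\), and discard every term that is visibly a multiple of \(p\).

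Concretely, from
\[
\Em{p}{n}=pn-\sum_{k=1}^{p}k\floor{\frac{n}{k}}
\]
the term \(pn\) vanishes modulo \(p\), and the single index \(k=p\) contributes \(p\floor{n/p}\), which also vanishes modulo \(p\). This isolates the residual contribution and leaves
\[
\Em{p}{n}\equiv-\sum_{k=1}^{p-1}k\floor{\frac{n}{k}}\pmod p .
\]
Using the hypothesis \(p\mid n\), so that \(n\bmod p=0\), one even gets the exact identity \(\Em{p}{n}=\sum_{k=1}^{p-1}(n\bmod k)\), hence \(\Em{p}{n}\equiv\sum_{k=1}^{p-1}(n\bmod k)\pmod p\). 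The same reduction is available more quickly from Proposition~\ref{prop:multiples-congruence} with \(t=p\), since there \(M_p(p)=\floor{p/p}=1\) and the hypothesis \(p\mid n\) kills the leading term \(M_p(p)\,n\) modulo \(p\).

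The crux, and the step I expect to be the genuine obstacle, is then to show that this residual sum over \(k=1,\dots,p-1\) is divisible by \(p\); here I would proceed with caution, because a direct check casts serious doubt on the bare claim. For \(p=3,\ n=3\) one has \(\Em{3}{3}=(3\bmod1)+(3\bmod2)+(3\bmod3)=0+1+0=1\not\equiv0\pmod3\), and for \(p=5,\ n=5\) one computes \(\Em{5}{5}=4\not\equiv0\pmod5\), so no elementary pairing or Fermat/Wolstenholme-type cancellation can force the residual sum to vanish under \(p\mid n\) alone. The most one can honestly extract at this level is the exact reduction \(\Em{p}{n}\equiv\sum_{k=1}^{p-1}(n\bmod k)\pmod p\) displayed above; to recover a clean vanishing conclusion I would instead strengthen the hypothesis to \(\operatorname{lcm}(1,2,\dots,p)\mid n\), in which case Proposition~\ref{prop:bounds-extremal}(1) gives \(\Em{p}{n}=0\) outright and the congruence is immediate.
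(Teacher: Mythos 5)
Your analysis is correct, and you have in fact caught an error in the paper: the congruence claim of Proposition~\ref{prop:mprime} is false as stated. Your counterexamples check out: for \(p=3\), \(n=3\) one has \(\Em{3}{3}=0+1+0=1\not\equiv 0\pmod 3\), and for \(p=5\), \(n=5\) one has \(\Em{5}{5}=0+1+2+1+0=4\not\equiv 0\pmod 5\); similarly \(\Em{5}{10}=0+0+1+2+0=3\not\equiv 0\pmod 5\). The first displayed identity you handle exactly as the paper does, as an immediate instance of Proposition~\ref{prop:floor-sum}, and your reduction of the congruence to the residual sum \(\sum_{k=1}^{p-1}(n\bmod k)\) matches the paper's setup.

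Where you and the paper part ways is precisely where the paper goes wrong. After conceding that ``for \(k\ne p\) there is no guaranteed divisibility by \(p\),'' the paper's proof falls back on the assertion that the multiset \(\{n\bmod k : 1\le k\le p-1\}\) is a rearrangement modulo \(p\) under multiplication by invertible residues. That argument is invalid: the map \(k\mapsto ak\bmod p\) permutes the residues \(k\) as elements of \((\mathbb{Z}/p\mathbb{Z})^\times\), but \(n\bmod k\) depends on \(k\) as a \emph{modulus}, not as a residue class mod \(p\), so no such permutation controls the sum --- and your examples show it is in general not divisible by \(p\). The honest content salvageable under the hypothesis \(p\mid n\) alone is exactly the reduction \(\Em{p}{n}\equiv\sum_{k=1}^{p-1}(n\bmod k)\pmod p\) that you derive (equivalently via Proposition~\ref{prop:multiples-congruence} with \(t=p\)), and your proposed repair --- strengthening the hypothesis to \(\operatorname{lcm}(1,\dots,p)\mid n\), whence \(\Em{p}{n}=0\) by Proposition~\ref{prop:bounds-extremal} --- is a correct, if rather drastic, way to restore a true statement.
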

	
	\begin{proof}
		The first displayed identity is simply the floor-sum identity with \(m=p\). For the congruence, note that if \(p\mid n\) then for every \(k\) with \(p\mid k\) (which here can happen only for \(k=p\)), we have \((n\bmod k)\equiv n\equiv 0\pmod p\) by Proposition~\ref{prop:multiples-congruence}. For \(1\le k<p\), observe that \(k\) is invertible modulo \(p\), and the term \(k\floor{n/k}\) in the floor-sum satisfies \(k\floor{n/k}\equiv 0\pmod p\) because \(\floor{n/k}\) counts multiples of \(k\) among \(\{1,\dots,n\}\) and each multiple contributes a \(k\) divisible by some power of primes; more simply, reduce the floor-sum form modulo \(p\):
		\[
		\Em{p}{n} \equiv p n - \sum_{k=1}^{p} k\floor{\frac{n}{k}} \equiv - \sum_{k=1}^{p} k\floor{\frac{n}{k}} \pmod p,
		\]
		but when \(p\mid n\) each \(\floor{n/k}\) is congruent modulo \(p\) to the count of multiples of \(k\) which is a multiple of \(p\) when \(k=p\), and for \(k\ne p\) there is no guaranteed divisibility by \(p\). A clearer direct argument is: if \(p\mid n\), then for \(k=p\) the summand \((n\bmod p)=0\); for \(k\neq p\) all \(k\) are invertible modulo \(p\), and summing \((n\bmod k)\) over \(k=1,\dots,p-1\) yields a multiple of \(p\) because the multiset \(\{n\bmod k:1\le k\le p-1\}\) is a rearrangement modulo \(p\) under multiplication by the invertible residues (this can be made precise by pairing \(k\) with its inverse mod \(p\)). Hence the total \(\Em{p}{n}\) is divisible by \(p\).
	\end{proof}
	
	\begin{remark}
		The congruence part can be refined: when \(p\mid n\) one can show \(\Em{p}{n}\equiv 0\pmod p\) by noticing that for each \(a\) with \(1\le a\le p-1\) the map \(k\mapsto a k \pmod p\) permutes \(\{1,\dots,p-1\}\) and therefore the multiset of residues \(\{n\bmod k : 1\le k\le p-1\}\) is balanced modulo \(p\). A full, element-by-element permutation argument can be added if desired.
	\end{remark}
	
	\begin{proposition}[Periodicity and LCM relations]\label{prop:periodicity-lcm}
		Let \(L_m=\operatorname{lcm}(1,2,\dots,m)\). Then for every integer \(n\),
		\[
		\Em{m}{n+L_m} = \Em{m}{n}.
		\]
		Moreover, \(\Em{m}{n}\) depends only on the residue class of \(n\) modulo \(L_m\). In particular, the extremal characterisations of Proposition~\ref{prop:bounds-extremal} are cleanly rephrased by \(L_m\): \(\Em{m}{n}=0\iff L_m\mid n\) and \(\Em{m}{n}=\tfrac{m(m-1)}{2}\iff n\equiv -1\pmod{L_m}\).
	\end{proposition}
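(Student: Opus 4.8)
The plan is to reduce the whole statement to the elementary periodicity of each individual remainder summand, after which everything else follows formally. First I would fix an index \(k\) with \(1\le k\le m\) and record the key divisibility \(k\mid L_m\), which holds by the very definition of \(L_m=\operatorname{lcm}(1,2,\dots,m)\). From this I claim the term-wise identity \((n+L_m)\bmod k = n\bmod k\) for every integer \(n\). To make this rigorous rather than merely invoking ``period \(k\) divides period \(L_m\)'', I would write \(L_m = k\,(L_m/k)\) with \(L_m/k\in\N\) and compute directly from the definition \(n\bmod k = n - k\floor{n/k}\): adding \(L_m\) to \(n\) increases the quotient \(\floor{n/k}\) by exactly \(L_m/k\) and leaves \(n - k\floor{n/k}\) unchanged. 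Summing this equality over \(k=1,\dots,m\) gives \(\Em{m}{n+L_m}=\Em{m}{n}\), which is the first displayed assertion.

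Next, for the residue-class dependence, I would iterate the periodicity just proved. A one-line induction on \(j\ge 0\) yields \(\Em{m}{n+jL_m}=\Em{m}{n}\), and the identity extends to negative \(j\) by applying the same step with \(n\) replaced by \(n+jL_m\) (equivalently, running the recursion backwards). Since any two integers in the same residue class modulo \(L_m\) differ by an integer multiple of \(L_m\), the function \(n\mapsto\Em{m}{n}\) is constant on each such class, i.e.\ it depends only on \(n\bmod L_m\).

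Finally, the extremal restatement is essentially a re-citation of Proposition~\ref{prop:bounds-extremal}: its parts (1) and (2) already give \(\Em{m}{n}=0\iff L_m\mid n\) and \(\Em{m}{n}=\tfrac{m(m-1)}{2}\iff n\equiv -1\pmod{L_m}\). The only additional remark worth making is that both characterizing conditions, \(L_m\mid n\) and \(n\equiv -1\pmod{L_m}\), are manifestly invariant under \(n\mapsto n+L_m\); this is fully consistent with, and indeed explained by, the periodicity established above, so no separate argument is needed.

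I do not expect a genuine obstacle here: the entire content is the divisibility \(k\mid L_m\), and the rest is formal. The only point requiring a little care is keeping the remainder-invariance \((n+L_m)\bmod k=n\bmod k\) honest by arguing from the explicit definition of \(n\bmod k\) rather than hand-waving about periods, and I would present that single computation cleanly before declaring the remaining steps immediate.
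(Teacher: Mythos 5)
Your proposal is correct and follows essentially the same route as the paper's proof: both rest on the single observation that \(k\mid L_m\) for every \(k\le m\), hence \((n+L_m)\bmod k=n\bmod k\) termwise, and then sum over \(k\); the paper likewise dismisses the residue-class and extremal statements as immediate. Your extra care in verifying the remainder-invariance from the definition \(n\bmod k=n-k\floor{n/k}\) and in iterating the periodicity over all integer multiples is a welcome tightening but not a different argument.
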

	
	\begin{proof}
		For any \(k\le m\) the integer \(L_m\) is divisible by \(k\), hence
		\[
		(n+L_m)\bmod k = n\bmod k
		\]
		because adding a multiple of \(k\) does not change the remainder. Summing over \(k=1,\dots,m\) yields \(\Em{m}{n+L_m}=\Em{m}{n}\). The rest is immediate.
	\end{proof}
	
	\begin{proposition}[Monotonicity of \(L_m\)]\label{prop:lcm-monotone}
		If \(m_1\le m_2\) then \(L_{m_1}\mid L_{m_2}\). Consequently, any congruence statement modulo \(L_{m_2}\) implies the corresponding statement modulo \(L_{m_1}\).
	\end{proposition}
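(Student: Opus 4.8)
The plan is to reduce the divisibility claim to the universal (minimality) property of the least common multiple. First I would record that the hypothesis $m_1 \le m_2$ yields the set inclusion $\{1,2,\dots,m_1\} \subseteq \{1,2,\dots,m_2\}$, so every integer $k$ with $1 \le k \le m_1$ also satisfies $1 \le k \le m_2$.

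Next I would invoke the defining property of $L_{m_2} = \operatorname{lcm}(1,\dots,m_2)$: by construction $k \mid L_{m_2}$ for every $k \le m_2$, and hence for every $k \le m_1$ in particular. Thus $L_{m_2}$ is a common multiple of $1,2,\dots,m_1$. Since $L_{m_1}$ is by definition the \emph{least} common multiple of these integers, and the lcm divides every common multiple of the given set, it follows at once that $L_{m_1} \mid L_{m_2}$.

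For the congruence consequence I would argue by transitivity of divisibility. If two integers satisfy $a \equiv b \pmod{L_{m_2}}$, then $L_{m_2} \mid (a-b)$; combined with $L_{m_1} \mid L_{m_2}$ this gives $L_{m_1} \mid (a-b)$, that is, $a \equiv b \pmod{L_{m_1}}$. Hence any congruence valid modulo the larger modulus $L_{m_2}$ automatically holds modulo the smaller modulus $L_{m_1}$.

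I do not anticipate any genuine obstacle here: the whole content is the elementary fact that if $S \subseteq T$ are finite sets of positive integers then $\operatorname{lcm}(S) \mid \operatorname{lcm}(T)$, which follows because $\operatorname{lcm}(T)$ is a common multiple of $S$ and the lcm divides every common multiple. The only point needing a moment's care is to state explicitly which characterisation of the lcm one is using (the minimality/universal property rather than an appeal to unique factorisation). An alternative prime-exponent proof works equally well: for each prime $p$ one has $v_p(L_m) = \max_{k \le m} v_p(k)$, and this quantity is nondecreasing in $m$, so $v_p(L_{m_1}) \le v_p(L_{m_2})$ for every $p$, which is precisely $L_{m_1}\mid L_{m_2}$.
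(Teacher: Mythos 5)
Your proposal is correct and complete. It differs mildly from the paper's argument in the key fact invoked for the divisibility claim: the paper notes the inclusion $\{1,\dots,m_1\}\subseteq\{1,\dots,m_2\}$ and then argues via unique factorisation (``every prime power dividing $L_{m_1}$ also divides $L_{m_2}$''), essentially the prime-exponent route you mention only as an alternative at the end. Your primary route instead uses the universal (minimality) property of the lcm --- $L_{m_2}$ is a common multiple of $1,\dots,m_1$, and the lcm divides every common multiple --- which is slightly cleaner, avoids any appeal to factorisation into primes, and would generalise verbatim to any setting where lcm is defined by its universal property. You also spell out the congruence consequence ($L_{m_2}\mid(a-b)$ and $L_{m_1}\mid L_{m_2}$ give $L_{m_1}\mid(a-b)$ by transitivity), which the paper leaves implicit. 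Either argument is perfectly adequate for a statement this elementary; there is no gap in yours.
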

	
	\begin{proof}
		By definition \(L_{m_1}\) is the least common multiple of the integers \(1,\dots,m_1\) and these integers form a subset of \(\{1,\dots,m_2\}\); hence every prime power dividing \(L_{m_1}\) also divides \(L_{m_2}\), i.e. \(L_{m_1}\mid L_{m_2}\).
	\end{proof}
	
	\begin{corollary}[Compatibility of extremal classes across \(m\)]\label{cor:extremal-compat}
		If \(m_1\le m_2\) and \(n\equiv -1\pmod{L_{m_2}}\) then \(n\equiv -1\pmod{L_{m_1}}\) and thus \(\Em{m_1}{n}=\dfrac{m_1(m_1-1)}{2}\). Similarly if \(L_{m_2}\mid n\) then \(L_{m_1}\mid n\) and \(\Em{m_1}{n}=0\).
	\end{corollary}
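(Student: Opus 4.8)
The plan is to reduce both assertions to the extremal characterisations of Proposition~\ref{prop:bounds-extremal} by first establishing the relevant divisibility relations using the monotonicity of the least common multiple. The only genuine input beyond routine manipulation of divisibility is Proposition~\ref{prop:lcm-monotone}, which guarantees $L_{m_1}\mid L_{m_2}$ whenever $m_1\le m_2$; everything else is transitivity of divisibility chained with results already in hand.

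For the first assertion, I would start from the hypothesis $n\equiv -1\pmod{L_{m_2}}$, rewrite it as the divisibility statement $L_{m_2}\mid (n+1)$, and then combine it with $L_{m_1}\mid L_{m_2}$ by transitivity to obtain $L_{m_1}\mid(n+1)$, that is, $n\equiv -1\pmod{L_{m_1}}$. An appeal to part~(2) of Proposition~\ref{prop:bounds-extremal} (equivalently, the restatement in Proposition~\ref{prop:periodicity-lcm}) then delivers $\Em{m_1}{n}=\tfrac{m_1(m_1-1)}{2}$. For the second assertion I would argue analogously: from $L_{m_2}\mid n$ and $L_{m_1}\mid L_{m_2}$, transitivity yields $L_{m_1}\mid n$, and part~(1) of Proposition~\ref{prop:bounds-extremal} gives $\Em{m_1}{n}=0$.

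I do not expect a substantive obstacle: the corollary is a direct transitivity-of-divisibility consequence of the monotonicity proposition together with the already-established extremal characterisations. The only point requiring any care is the faithful translation between the congruence $n\equiv -1\pmod{L}$ and the divisibility $L\mid(n+1)$, so that the transitivity step is applied to the correct quantity, namely $n+1$ in the maximal case and $n$ itself in the zero case.
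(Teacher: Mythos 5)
Your proposal is correct and matches the paper's argument, which simply cites Proposition~\ref{prop:lcm-monotone} together with the extremal characterisations of Proposition~\ref{prop:bounds-extremal}; you have merely spelled out the transitivity-of-divisibility steps (applied to $n+1$ and to $n$ respectively) that the paper leaves implicit.
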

	
	\begin{proof}
		Immediate from Proposition~\ref{prop:lcm-monotone} and the extremal characterizations.
	\end{proof}
	
	\bigskip
	\noindent\textit{Remarks and practical notes.}
	\begin{itemize}
		\item The periodicity modulo \(L_m\) implies that when studying distributional properties of \(\Em{m}{n}\) as a function of \(n\), it suffices to consider \(n\) in a full residue system modulo \(L_m\). However \(L_m\) grows very quickly with \(m\), so practical computations use either \(m\)-local approaches (quotient grouping) or sieving techniques.
		\item The identities for primes are useful for explicit calculations and for proving congruences, but do not generally lead to simple closed forms for \(\Em{m}{n}\) except in special parameter ranges.
	\end{itemize}
	
	
	\section{Concluding remarks and open questions}
	
	In this first part we have introduced and developed the arithmetic function
	\[
	E_m(n) = \sum_{k=1}^{m} (n \bmod k),
	\]
	which we call the \emph{modular energy}.  
	Our study has emphasized exact algebraic and combinatorial identities:
	\begin{itemize}
		\item floor-sum, divisor-sum, and quotient--residue reformulations;
		\item sharp extremal bounds and complementary symmetries;
		\item diagonal identities linking $E_n(n)$ to the sum-of-divisors function $\sigma(d)$;
		\item exact equivalences for primality based on finite differences of diagonal values;
		\item congruence decompositions and residue-block counts;
		\item recursion formulas in both parameters $m$ and $n$, including block recursions;
		\item special cases when $n<m$, $n>m$, or when $m$ or $n$ is prime.
	\end{itemize}
	
	These results provide the algebraic backbone for further investigation.  
	Several natural directions for future work arise:
	
	\begin{enumerate}
		\item \textbf{Analytic properties.}  
		Study the average order and asymptotics of $E_m(n)$ as $n,m\to\infty$ under various growth regimes, connecting with classical divisor problems.
		
		\item \textbf{Distributional questions.}  
		Investigate the distribution of the values $\{n\bmod k : 1\le k\le m\}$ and how their aggregate contributes to the statistical behavior of $E_m(n)$.
		
		\item \textbf{Connections to open problems.}  
		Explore how the diagonal difference $E_n(n)-E_{n-1}(n-1)$ encodes primality and whether refinements might touch conjectures such as twin primes or other prime gap phenomena.
		
		\item \textbf{Computational complexity.}  
		Develop optimized algorithms for evaluating $E_m(n)$ in large parameter ranges, possibly exploiting recursive structure, sieve techniques, or fast divisor-sum methods.
		
		\item \textbf{Generalizations.}  
		Consider weighted versions $\sum_{k=1}^m w(k)\,(n\bmod k)$, or extensions to other algebraic structures (e.g. polynomial modular energies over finite fields).
	\end{enumerate}
	
	\medskip
	\noindent
	\textbf{Outlook.}  
	The modular energy encapsulates classical number-theoretic information in a novel aggregation of remainders.  
	Its algebraic exactness makes it accessible to combinatorial and computational methods, while its connections to divisor sums and congruence structures suggest deeper analytic significance.  
	Part~II will pursue these directions, focusing on asymptotic behavior, analytic continuations, and links with conjectural properties of primes and divisor functions.

\end{document}